\newtheorem{theorem}{Theorem}
\newtheorem{corollary}[theorem]{Corollary}
\newtheorem{definition}[theorem]{Definition}
\newtheorem{lemma}[theorem]{Lemma}
\newtheorem{proposition}[theorem]{Proposition}
\newenvironment{proof}[1][Proof]{\noindent\textbf{#1.} }{\ \rule{0.5em}{0.5em}}
\def\dom{\mathop{\mathrm{dom}}\nolimits}
\def\im{\mathop{\mathrm{im}}\nolimits}
\def\rank{\mathop{\mathrm{rank}}\nolimits}
\def\id{\mathop{\mathrm{id}}\nolimits}
\def\R{\mathbb R}
\def\T{\mathcal{T}}
\def\O{\mathcal{O}}
\def\OP{\mathcal{OP}}
\title{On Relative Ranks of the Semigroup of Orientation-preserving Transformations on Infinite Chains}
\author{Ilinka Dimitrova\footnote{The author gratefully acknowledges support of DAAD, within the funding programme "Research Stays for University Academics and Scientists, 2019" number 57442043.} \\
\textit{Faculty of Mathematics and Natural Science}\\
\textit{South-West University "Neofit Rilski"}\\
\textit{2700 Blagoevgrad, Bulgaria}\\
\textit{e-mail: ilinka\_dimitrova@swu.bg}\\
~~\\
J\"{o}rg Koppitz \\
\textit{Institute of Mathematics}\\
\textit{Potsdam University}\\
\textit{Potsdam, 14469, Germany}\\
\textit{koppitz@uni-potsdam.de}\\
~~\\
\textit{Institute of Mathematics and Informatics}\\
\textit{Bulgarian Academy of Sciences}\\
\textit{1113 Sofia, Bulgaria}\\
\textit{e-mail: koppitz@math.bas.bg}}
\begin{document}

\maketitle

\begin{abstract}
In this paper, we determine the relative rank of the semigroup
$\OP(X)$ of all orientation-preserving transformations on infinite chains modulo the semigroup $\O(X)$ of all order-preserving transformations.
\end{abstract}

\textit{Key words:} transformation semigroups on infinite chains, order-preserving
transformations, orientation-preserving transformations, relative rank.

2010 Mathematics Subject Classification: 20M20
\\

\section{Introduction and Preliminaries}

The \textit{rank} of a semigroup $S$ is the minimum cardinality of a
generating set of $S$. For a finitely generated semigroup $S$ determining the rank of $S$ is a natural question
and obviously yields some information about the semigroup.
The ranks of various well known semigroups have been calculated \cite{FS, GH, GH1, HMcF}.
By the famous theorem of Sierpi\'{n}ski \cite{Sierp}, we know that any countable semigroup
can be embedded in a semigroup with rank less than three. On the other hand,
if the semigroup is uncountable then the rank is equal to the cardinality of
the semigroup. In this case, the notion of rank provides us with no
information. Here, we use a different rank property, namely the notation of
a relative rank. It was introduced by Ru\v skuc \cite{Ruskuc} in order to describe the generating sets of semigroups with infinite rank.
The \textit{relative rank} of a semigroup $S$ modulo a subset $A$ of $S$, denoted by $\rank (S : A)$,
is the minimal cardinality of a set $B\subseteq S$ such that $A \cup B$ generates $S$.
It follows immediately from the definition that $\rank (S : \emptyset) = \rank S$, $\rank ( S : S) = 0$,
$\rank (S : A ) = \rank (S : \langle A \rangle )$, and $\rank (S : A) = 0$ if and only if $A$ is a generating set for $S$.

Let $X$ be a nonempty infinite set and denote by $\T(X)$ the monoid of all full transformations of $X$ (under composition).
As a consequence of the main result in \cite{Sierp}, that every countable subset of $\T(X)$ is contained in a two-generated subsemigroup of $\T(X)$,
Banach obtained that the relative rank of $\T(X)$ modulo $A \subseteq \T(X)$ is either uncountable or at most two \cite{Ban}.
Relative ranks of subsemigroups of $\T(X)$ were considered in \cite{HHR, HRH}. Howie et al. showed that the relative rank of the full transformation semigroup $\T(X)$ modulo the set of all idempotent elements and modulo the symmetric group, respectively, is two. Moreover, they showed that the
relative rank of $\T(X)$ modulo the top $\mathcal{J}$-class of $\T(X)$ is zero.
This study was continued in \cite{HMR2} and was extended to subsemigroups of the monoid of all
binary relations and the symmetric inverse monoid (that is, all injective partial mappings) over infinite sets.
Similar properties were considered in the context of groups in \cite{DNT, Galv, McPhN}, where subgroups of symmetric groups are studied.
Recently, Tinpun and Koppitz considered generating sets of infinite full transformation semigroups with restricted range \cite{TK2}.

Now, let $X = (X,\leq)$ be a chain (totally ordered set). A function $f : A \rightarrow X$ from a subchain $A$ of $X$ into $X$ is said to be \textit{order-preserving} if $x \leq y$ implies $xf \leq yf$, for all $x, y \in A$. Notice that, given two subchains $A$ and $B$ of $X$ and an order-isomorphism (i.e. an order-preserving bijection) $f : A \rightarrow B$, then the inverse function $f^{-1} : B \rightarrow A$ is also an order-isomorphism. In this case, the subchains $A$ and $B$ are called \textit{order-isomorphic}. We denote by $\O(X)$ the submonoid of $\T(X)$ of all order-preserving transformations of $X$.
The relative rank of $\T(X)$ modulo $\O(X)$ was considered by Higgins, Mitchell, and Ru\v skuc \cite{HMR1}.
They showed that $\rank(\T(X) : \O(X)) = 1$, when $X$ is an arbitrary countable chain or an arbitrary well-ordered set,
while $\rank(\T(\R) : \O(\R))$ is uncountable, by considering the usual order of the real numbers $\R$. In \cite{HMMR}, the relative ranks $\rank(\T(X) : \O(X))$ were considered,
where $X$ is a countably infinite partially ordered set. In \cite{DFK}, Dimitrova, Fernandes, and Koppitz studied particular conditions for a countable infinite chain such that the monoid $\O(X)$ is generated by the set $J$ of its transformations with infinite range (image), i.e. $\rank (\O(X) : J) = 0$.

A generalization of the concept of an order-preserving transformation is the
concept of an orientation-preserving transformation, which was introduced in
1998 by McAlister \cite{McA} and, independently, one year later by Catarino and
Higgins \cite{CH}, but only for finite chains. In \cite{FJS}, Fernandes, Jesus, and Singha
introduced the concept of an orientation-preserving transformation on an
infinite chain. It generalizes the concept for a finite chain.
\begin{definition} \rm \cite{FJS}
Let $\alpha \in \T(X)$. We say that $\alpha$ is an \textit{orientation-preserving} transformation if there exists a non-empty subset $X_1$ of $X$ such that:\\
(1) $\alpha$ is order-preserving both on $X_1$ and on $X_2 = X\setminus X_1$;\\
(2) for all $a \in X_1$ and $b \in X_2$, we have $a < b$ and $a\alpha \geq b\alpha$.
\end{definition}
In the present paper, we will keep this meaning of $X_1$ and $X_2$ for an orientation-preserving transformation if it is clear from the
context. We call to such a subset $X_1$ an \textit{ideal} of $\alpha$. Notice that $X_1$ is an order ideal of $X$.
Recall that a subset $I$ of $X$ is called an order ideal of $X$ if $x \leq a$ implies $x \in I$, for all $x \in X$ and all $a \in I$.

Denote by $\OP(X)$ the subset of $\T(X)$ of all orientation-preserving transformations. In \cite{FJS}, Fernandes, Jesus, and Singha proved that $\OP(X)$ is a semigroup. Moreover, they proved that if $\alpha \in \OP(X)$ is a non-constant transformation then $\alpha$ admits a unique ideal.
Clearly, $\O(X) \subseteq \OP(X)$ and we have $\alpha \in \O(X)$ if and only if $\alpha \in \OP(X)$ and $\alpha$ admits $X$ as an ideal.

In contrast to the infinite case, the finite case is well studied. Several
properties of the monoid of orientation-preserving transformations on a
finite chain have been investigated in \cite{CH, McA}. A presentation
for this monoid, in terms of two (its rank) generators, was found by Arthur
and Ru\v{s}kuc \cite{AR}. The ranks of its ideals were determined by Zhao and Fernandes \cite{ZF}.
In \cite{TK1}, Tinpun and Koppitz considered the relative rank of the finite full transformation semigroup
with restricted range. In \cite{DKT}, Dimitrova, Koppitz, and Tinpun studied rank properties of the semigroup of orientation-preserving transformations with
restricted range on a finite chain. In \cite{DK}, Dimitrova and Koppitz determined the relative rank of the finite full transformation semigroup with restricted range $\T(X,Y)$ modulo its subsemigroup $\OP(X,Y)$ of all orientation-preserving transformation with restricted range. Moreover, they considered the relative rank of the semigroup $\OP(X,Y)$ modulo the monoid $\O(X,Y)$ of all order-preserving transformations in $\OP(X,Y)$.
The number of papers on full transformation semigroups and their subsemigroups for an infinite set $X$ is many times
smaller than in the finite case. In this paper, we determine the relative rank of the semigroup
$\OP(X)$ of all orientation-preserving transformations on infinite chains modulo the subsemigroup $\O(X)$ of all order-preserving transformations.
This corresponds to the second part of Problem 5.1 given by Fernandes, Jesus, and Singha in \cite{FJS}. We will give a complete answer for a certain class of
infinite chains, which contains classical infinite chains.

We begin by recalling some notations and definitions that will be used in the paper.
For every transformation $\alpha \in \T(X)$, we denote by $\dom \alpha$ and $\im \alpha$ the domain and the image (range) of $\alpha$, respectively.
The inverse of $\alpha$ is denoted by $\alpha^{-1}$. For a subset $A \subseteq \T(X)$, we denote by $\langle A \rangle$ the subsemigroup of $\T(X)$ generated by $A$.
For a subset $C \subseteq X$, we denote by $\alpha|_C$ the restriction of $\alpha$ to $C$ and by $\id_C$ the identity mapping on $C$.
A subset $C$ of $X$ is called a \textit{convex} subset of $X$ if $z \in X$ and $x < z < y$ imply $z \in C$, for all $x,y \in C$.
Let $C, D$ be convex subsets of $X$. We will write $C < D$ (respectively, $C \leq D$) if $c < d$ (respectively, $c \leq d$) for all $c \in C$ and all $d \in D$. If $C = \{c\}$ or $D = \{d\}$, we write $c < D$, $C < d$ (respectively, $c \leq D$, $C \leq d$) instead of $\{c\} < D$ or $C < \{d\}$ (respectively, $\{c\} \leq D$ or $C \leq \{d\}$).

For $A < B \subseteq X$ and $a, b \in X$ with $a < b$, $a < B$, and $A < b$, we put
$$(A,B) = \{x \in X : A < x < B\},$$
for a convex subset of $X$ which has no minimum and no maximum;
$$[a,B) = \{x \in X : a \leq x < B\},$$
for a convex subset of $X$ which has a minimum $a$ but no maximum;
$$(A,b] = \{x \in X : A < x \leq b\},$$
for a convex subset of $X$ which has no minimum but a maximum $b$;
$$[a,b] = \{x \in X : a \leq x \leq b\},$$
for a convex subset of $X$ which has a minimum $a$ and a maximum $b$.

Notice that if $A = \emptyset$ or $B = \emptyset$ then we have
$$(A,\emptyset) = \{x \in X : A < x\}, ~~~ (\emptyset,B) = \{x \in X : x < B\},$$
$$[a,\emptyset) = \{x \in X : a \leq x \}, ~\mbox{ and }~ (\emptyset,b] = \{x \in X : x \leq b\}.$$

A total order $<$ on a set $X$ is said to be dense if, for all $x, y \in X$ with $x < y$, there is a $z \in X$ such that $x < z < y$.
For the remaining part of the paper, $X$ will be an infinite densely totally ordered set and any two convex subsets of $X$ of the type $(A,B)$ (i.e. has no minimum and no maximum) are order-isomorphic. We will consider the following cases:
\begin{enumerate}
  \item $X$ has no minimum and no maximum.
  \item $X$ has a minimum and a maximum.
  \item $X$ has a minimum but no maximum.
  \item $X$ has no minimum but a maximum.
\end{enumerate}

The first case, when $X$ has no minimum and no maximum, was considered by Kittisak Tinpun in his thesis \cite{Tinp}. He proved that $\rank(\OP(X) : \O(X)) = 2$.
We will consider the other cases.\\
If $X$ has a minimum and a maximum, we put $a =\min X$ and $b =\max X$. Then we have $X = [a,b]$.\\
If $X$ has a minimum but no maximum, we put $a =\min X$. Then we have $X = [a,\emptyset)$.\\
If $X$ has no minimum but a maximum, we put $b =\max X$. Then we have $X = (\emptyset,b]$.

\begin{lemma}\label{le1} \rm
Let $X$ has a minimum $a$ and let $\alpha \in \OP(X)\setminus\O(X)$ with ideal $X_1$. Then $a\alpha$ is the minimum of $X_1\alpha$.
\end{lemma}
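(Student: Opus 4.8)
The plan is to show two things about $a\alpha$: that it belongs to $X_1\alpha$, and that it is a lower bound for $X_1\alpha$. Together these give that $a\alpha$ is the minimum of $X_1\alpha$. The only two ingredients I would need are that $X_1$ is an order ideal of $X$ and that $\alpha$ restricted to $X_1$ is order-preserving; both are available, the latter directly from condition (1) of the definition of an orientation-preserving transformation.

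First I would locate $a$ inside $X_1$. Since $\alpha \in \OP(X)\setminus\O(X)$, the map $\alpha$ cannot be constant (any constant transformation is order-preserving and hence lies in $\O(X)$), so by the result of Fernandes, Jesus, and Singha it admits a unique ideal, which is $X_1$; in particular $X_1$ is nonempty. As recorded in the preliminaries, $X_1$ is an order ideal of $X$, that is, downward closed. Choosing any $c \in X_1$ and using $a = \min X$, we get $a \leq c$, and downward closedness then yields $a \in X_1$. Consequently $a\alpha \in X_1\alpha$.

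Next I would use the order-preservation of $\alpha$ on $X_1$. For every $x \in X_1$ we have $a \leq x$, because $a$ is the minimum of the whole chain $X$; applying condition (1) gives $a\alpha \leq x\alpha$. Hence $a\alpha$ is a lower bound for the set $X_1\alpha$. Combining this with $a\alpha \in X_1\alpha$ from the previous step shows that $a\alpha$ is the minimum of $X_1\alpha$, as claimed.

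I do not expect a genuine obstacle here: the statement is essentially a direct unwinding of the definition of an orientation-preserving transformation together with the order-ideal property of $X_1$. The only point warranting a line of justification is the membership $a \in X_1$, which relies on $X_1$ being a nonempty downward-closed set and $a$ being the global minimum; once this is in place, the order-preservation of $\alpha|_{X_1}$ finishes the argument immediately.
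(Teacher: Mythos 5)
Your proof is correct and takes essentially the same route as the paper: both arguments combine $a = \min X$ with the order-preservation of $\alpha|_{X_1}$ to conclude $a\alpha \leq x\alpha$ for all $x \in X_1$. Your explicit verification that $a \in X_1$ (from the nonemptiness of $X_1$ and its downward closedness) is a detail the paper's proof leaves implicit, and including it is a sensible refinement rather than a departure.
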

\begin{proof}
Since $a$ is the minimum of $X$, we have $a \leq c$ for all $c \in X = X_1 \cup X_2$. Moreover, $\alpha \in \OP(X)$ implies $\alpha |_{X_1}$ is order-preserving and thus, $a\alpha \leq c\alpha$, for all $c \in X_1$. Hence, $a\alpha$ is the minimum of $X_1\alpha$.
\end{proof}
~~\\

Dually, one can verify:
\begin{lemma}\label{le4} \rm
Let $X$ has a maximum $b$ and let $\alpha \in \OP(X)\setminus\O(X)$ with ideal $X_1$. Then $b\alpha$ is the maximum of $X_2\alpha$.
\end{lemma}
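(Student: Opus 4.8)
The plan is to mirror exactly the proof of Lemma \ref{le1}, with the maximum $b$ and the block $X_2$ playing the roles of the minimum $a$ and the ideal $X_1$. The first point I would establish is that $b$ actually lies in $X_2$. Since $\alpha \in \OP(X)\setminus\O(X)$, the set $X_2$ cannot be empty: otherwise $X_1 = X$ would be an ideal of $\alpha$, forcing $\alpha$ to be order-preserving on all of $X$ and hence $\alpha \in \O(X)$, a contradiction. Picking any $d \in X_2$, the defining property of an orientation-preserving transformation gives $x < d$ for every $x \in X_1$; were $b \in X_1$, we would obtain $d > b$, contradicting $b = \max X$. Hence $b \in X_2$.

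The remaining step is the order-reversed dual of the computation in Lemma \ref{le1}. Because $b = \max X$, we have $c \leq b$ for every $c \in X$, in particular for every $c \in X_2$. Since $\alpha \in \OP(X)$, the restriction $\alpha|_{X_2}$ is order-preserving, so $c\alpha \leq b\alpha$ for all $c \in X_2$. Combined with $b \in X_2$ (so that $b\alpha \in X_2\alpha$), this shows that $b\alpha$ is the maximum of $X_2\alpha$, as desired.

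I do not expect any genuine obstacle, since the statement is simply the order-reversed dual of Lemma \ref{le1}. The only point that deserves to be made explicit — and which is implicit already in the proof of Lemma \ref{le1} — is that the relevant extremal element of $X$ belongs to the relevant block of the partition (here $b \in X_2$, there $a \in X_1$); this membership is exactly what guarantees that $b\alpha$ is an element of $X_2\alpha$ rather than merely an upper bound for it.
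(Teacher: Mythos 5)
Your proof is correct and takes essentially the same route the paper intends by ``Dually, one can verify'': it is the order-reversed version of the proof of Lemma \ref{le1}. The one step you spell out that the paper leaves implicit (in both lemmas) is the membership of the extremal element in the relevant block --- $b \in X_2$ here, $a \in X_1$ there --- and your argument for it (nonemptiness of $X_2$ from $\alpha \notin \O(X)$, then condition (2) of the definition) is valid.
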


\begin{lemma}\label{le2} \rm
Let $\alpha \in \OP(X)\setminus \O(X)$ with ideal $X_1$ and let $X_1$ has a maximum or $X_2$ has a minimum. If $c \in X$ is the maximum of $X_1$ (respectively, the minimum of $X_2$) then $c\alpha$ is the maximum of $\im\alpha$ (respectively, $c\alpha$ is the minimum of $\im\alpha$).
\end{lemma}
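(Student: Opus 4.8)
The plan is to prove the two assertions separately, since the hypothesis is a disjunction and each conclusion is a conditional statement; moreover, by order-reversal duality it suffices to treat one of them carefully and invoke duality for the other. I would handle the case where $c$ is the maximum of $X_1$ and aim to show that $c\alpha = \max \im\alpha$.

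The key observation is that the image splits as $\im\alpha = X_1\alpha \cup X_2\alpha$, so to show $c\alpha$ is the maximum it suffices to show that $c\alpha$ dominates every element of each of these two sets, while $c\alpha$ itself lies in $X_1\alpha \subseteq \im\alpha$. For the first set I would use that $\alpha|_{X_1}$ is order-preserving (part (1) of the definition of an orientation-preserving transformation): since $c = \max X_1$, every $x \in X_1$ satisfies $x \leq c$, whence $x\alpha \leq c\alpha$, so $c\alpha$ bounds $X_1\alpha$ from above.

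For the second set, the crucial ingredient is the wrap-around inequality in part (2) of the definition: for every $y \in X_2$, taking $a = c \in X_1$ and $b = y \in X_2$ gives $c\alpha \geq y\alpha$, so $c\alpha$ also bounds $X_2\alpha$ from above. Combining the two, $c\alpha$ is an upper bound of $\im\alpha$ that is attained, i.e. $c\alpha = \max \im\alpha$, as required.

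The dual statement, in which $c$ is the minimum of $X_2$, follows by reversing all inequalities: order-preservation of $\alpha|_{X_2}$ yields $c\alpha \leq y\alpha$ for all $y \in X_2$, and part (2) applied to $a \in X_1$ and $b = c$ yields $x\alpha \geq c\alpha$ for all $x \in X_1$, so $c\alpha$ is a lower bound of $\im\alpha$ lying in $X_2\alpha$, hence $c\alpha = \min \im\alpha$. I do not expect any genuine obstacle here; the only point that requires care is to remember that both halves $X_1\alpha$ and $X_2\alpha$ must be bounded, and that the bound on the \emph{far} half is supplied by part (2) of the definition rather than by order-preservation within a single block.
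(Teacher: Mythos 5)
Your proposal is correct and follows essentially the same route as the paper's proof: bound $X_1\alpha$ by $c\alpha$ using order-preservation of $\alpha|_{X_1}$ together with $c=\max X_1$, bound $X_2\alpha$ by $c\alpha$ using the wrap-around inequality $X_2\alpha \leq X_1\alpha$ from part (2) of the definition, and treat the minimum-of-$X_2$ case dually. The only cosmetic difference is that the paper phrases the argument by picking an arbitrary $y \in \im\alpha$ and splitting on whether its preimage lies in $X_1$ or $X_2$, whereas you split the image itself as $X_1\alpha \cup X_2\alpha$; these are the same argument.
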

\begin{proof}
Let $c \in X$ be the maximum of $X_1$ and let $y \in \im\alpha$. Then there is $x \in X$ such that $x\alpha = y$. If $x \in X_1$ then $x\alpha \leq c\alpha$, since $c$ is the maximum of $X_1$ and $\alpha |_{X_1}$ is order-preserving. If $x \in X_2$ then $x\alpha \leq z\alpha$, for all $z \in X_1$, since $X_2\alpha \leq X_1\alpha$ ($\alpha$ is orientation-preserving). Therefore, we obtain $y\leq c\alpha$, i.e. $c\alpha$ is the maximum of $\im\alpha$.

Now, let $c \in X$ be the minimum of $X_2$. Dually, we obtain that $c\alpha$ is the minimum of $\im\alpha$.
\end{proof}

\begin{proposition}\label{pr2} \rm \cite{FJS}
Let $\alpha \in \OP(X)$ with ideal $X_1$. If $X_1\alpha \cap X_2\alpha \neq \emptyset$ then $X_1\alpha \cap X_2\alpha = \{c\}$, for some $c \in X$. Moreover, in this case, $X_1\alpha$ has a minimum, $X_2\alpha$ has a maximum, and both of these elements coincide with $c$.
\end{proposition}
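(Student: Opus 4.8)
The plan is to exploit the defining inequality of an orientation-preserving transformation, namely that $a\alpha \geq b\alpha$ whenever $a \in X_1$ and $b \in X_2$; equivalently, every element of $X_1\alpha$ dominates every element of $X_2\alpha$. All three assertions will follow from this single domination property together with the uniqueness of extrema in a chain.

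First I would fix an arbitrary element $c \in X_1\alpha \cap X_2\alpha$ and write $c = a\alpha = b\alpha$ for some $a \in X_1$ and some $b \in X_2$. To see that $c$ is the minimum of $X_1\alpha$, take any $y \in X_1\alpha$, say $y = a'\alpha$ with $a' \in X_1$. Applying the orientation condition to the pair $a' \in X_1$, $b \in X_2$ gives $y = a'\alpha \geq b\alpha = c$, so $c$ is a lower bound of $X_1\alpha$; since $c \in X_1\alpha$, it is in fact the minimum. Dually, taking any $z = b'\alpha \in X_2\alpha$ and applying the condition to the pair $a \in X_1$, $b' \in X_2$ yields $c = a\alpha \geq b'\alpha = z$, so $c$ is an upper bound of $X_2\alpha$ and, being a member of $X_2\alpha$, is its maximum.

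The singleton claim then drops out of what has just been shown. Any element of $X_1\alpha \cap X_2\alpha$ is simultaneously the minimum of $X_1\alpha$ and the maximum of $X_2\alpha$; since the minimum of a subset of a chain is unique, any two such elements must coincide, whence $X_1\alpha \cap X_2\alpha = \{c\}$. This same reasoning delivers the \emph{moreover} clause at once, for the common value $c$ is by construction both the minimum of $X_1\alpha$ and the maximum of $X_2\alpha$.

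I do not expect a genuine obstacle here; the only point requiring care is the bookkeeping of which index set each inequality comes from. The domination relation must be invoked with the fixed witness $b \in X_2$ when bounding $X_1\alpha$ from below, and with the fixed witness $a \in X_1$ when bounding $X_2\alpha$ from above. Everything else is immediate from the uniqueness of minima and maxima in a totally ordered set.
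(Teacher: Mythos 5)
Your proof is correct, but there is nothing in the paper to compare it against: Proposition \ref{pr2} is stated with the citation \cite{FJS} and no proof is given in this paper at all — it is imported from Fernandes, Jesus, and Singha. Your argument is a complete, elementary derivation directly from the definition: fixing $c = a\alpha = b\alpha$ with $a \in X_1$, $b \in X_2$, the domination condition (2) applied to pairs $(a',b)$ shows $c$ is a lower bound (hence the minimum) of $X_1\alpha$, applied to pairs $(a,b')$ shows $c$ is an upper bound (hence the maximum) of $X_2\alpha$, and uniqueness of extrema in a chain then forces $X_1\alpha \cap X_2\alpha = \{c\}$. One point worth noting: your proof uses only condition (2) of the definition — the pointwise domination of $X_2\alpha$ by $X_1\alpha$ — and never invokes the order-preservation of $\alpha$ on $X_1$ or on $X_2$, so the conclusion in fact holds for any decomposition $X = X_1 \cup X_2$ satisfying (2) alone; this is a mild strengthening of the statement as quoted.
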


From Proposition \ref{pr2} and Lemma \ref{le1}, we obtain:
\begin{corollary}\label{co1} \rm
Let $X = [a,\emptyset)$ and let $\alpha \in \OP(X)\setminus\O(X)$ with ideal $X_1$. If $X_1\alpha \cap X_2\alpha \neq \emptyset$ then $X_1\alpha \cap X_2\alpha = \{a\alpha\}$.
\end{corollary}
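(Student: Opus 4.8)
The plan is to obtain the result purely by combining Proposition \ref{pr2} with Lemma \ref{le1}, exploiting the fact that both of these statements pin down \emph{the same} object, namely the minimum of $X_1\alpha$. Since a minimum is unique whenever it exists, the two descriptions are forced to coincide, and this coincidence is exactly the assertion.

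First I would invoke Proposition \ref{pr2}. Under the hypothesis $X_1\alpha \cap X_2\alpha \neq \emptyset$, it tells us that the intersection is a single point $\{c\}$ and, crucially, that this $c$ is the minimum of $X_1\alpha$. At this stage I would check that the hypotheses of the proposition are met: $\alpha \in \OP(X)$ with ideal $X_1$ is given, and the nonemptiness of the intersection is precisely our assumption.

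Next I would apply Lemma \ref{le1}. Because $X = [a,\emptyset)$ has minimum $a$ and $\alpha \in \OP(X)\setminus\O(X)$, the lemma yields that $a\alpha$ is the minimum of $X_1\alpha$. Here I would verify that $\alpha \notin \O(X)$ and that $X$ indeed has a minimum, which are both part of the standing hypotheses of the corollary.

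Finally, I would observe that $X_1\alpha$, as a subset of a totally ordered set, can have at most one minimum. Since both $c$ and $a\alpha$ are minima of $X_1\alpha$, they must be equal, so $c = a\alpha$ and hence $X_1\alpha \cap X_2\alpha = \{a\alpha\}$. I do not anticipate any genuine obstacle: the entire content is the uniqueness of minima, and the only care needed is in confirming that the two cited results apply under the present hypotheses.
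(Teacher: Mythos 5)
Your proof is correct and follows exactly the route the paper intends: the paper derives Corollary~\ref{co1} by citing Proposition~\ref{pr2} (the intersection point is the minimum of $X_1\alpha$) together with Lemma~\ref{le1} ($a\alpha$ is the minimum of $X_1\alpha$), and identifying the two via uniqueness of minima, which is precisely your argument.
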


From Proposition \ref{pr2} and Lemma \ref{le4}, we obtain:
\begin{corollary}\label{co4} \rm
Let $X = (\emptyset,b]$ and let $\alpha \in \OP(X)\setminus\O(X)$ with ideal $X_1$. If $X_1\alpha \cap X_2\alpha \neq \emptyset$ then $X_1\alpha \cap X_2\alpha = \{b\alpha\}$.
\end{corollary}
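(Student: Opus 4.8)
The plan is to derive this as an immediate consequence of Proposition~\ref{pr2} and Lemma~\ref{le4}, mirroring the dual argument that establishes Corollary~\ref{co1} from Proposition~\ref{pr2} and Lemma~\ref{le1}.

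First I would apply Proposition~\ref{pr2} to $\alpha$. Its hypotheses hold: $\alpha \in \OP(X)$ has ideal $X_1$, and we are given $X_1\alpha \cap X_2\alpha \neq \emptyset$. The proposition then supplies an element $c \in X$ with $X_1\alpha \cap X_2\alpha = \{c\}$, and, by its ``moreover'' clause, asserts in particular that $c$ is the maximum of $X_2\alpha$.

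Next I would invoke Lemma~\ref{le4}. Since $X = (\emptyset,b]$ has maximum $b$ and $\alpha \in \OP(X)\setminus\O(X)$ has ideal $X_1$, the lemma gives that $b\alpha$ is the maximum of $X_2\alpha$. As the maximum of a set is unique whenever it exists, the two descriptions of $\max(X_2\alpha)$ must agree, forcing $c = b\alpha$. Substituting this into $X_1\alpha \cap X_2\alpha = \{c\}$ then yields $X_1\alpha \cap X_2\alpha = \{b\alpha\}$, which is the desired conclusion.

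I do not expect any genuine obstacle here; the entire content resides in the two cited results, and all that remains is to verify that their hypotheses are met and to match up the two maxima. The only point deserving a moment's care is making sure that Proposition~\ref{pr2} genuinely pins down $c$ as the maximum of $X_2\alpha$ (and not merely as some common value of the two images), so that Lemma~\ref{le4} can be brought to bear on the same element.
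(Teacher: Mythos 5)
Your proposal is correct and matches the paper's own derivation: the paper likewise obtains Corollary~\ref{co4} by combining Proposition~\ref{pr2} (which identifies the unique intersection point $c$ as the maximum of $X_2\alpha$) with Lemma~\ref{le4} (which identifies $b\alpha$ as that maximum), so that uniqueness of the maximum forces $c = b\alpha$.
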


From Proposition \ref{pr2}, Lemma \ref{le1} and \ref{le4}, we obtain:
\begin{corollary}\label{co3} \rm
Let $X = [a,b]$ and let $\alpha \in \OP(X)\setminus\O(X)$ with ideal $X_1$. If $X_1\alpha \cap X_2\alpha \neq \emptyset$ then $a\alpha = b\alpha$ and $X_1\alpha \cap X_2\alpha = \{a\alpha\}$.
\end{corollary}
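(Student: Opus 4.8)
The plan is to assemble the claim directly from the three cited results, exploiting that $X = [a,b]$ simultaneously has a minimum $a$ and a maximum $b$, so that both Lemma~\ref{le1} and Lemma~\ref{le4} are applicable to $\alpha$. First I would invoke Proposition~\ref{pr2}: since $\alpha \in \OP(X)$ and $X_1\alpha \cap X_2\alpha \neq \emptyset$, this intersection is a singleton $\{c\}$ for some $c \in X$, and moreover $c$ is at once the minimum of $X_1\alpha$ and the maximum of $X_2\alpha$.

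Next I would determine the value of $c$ from two directions. Applying Lemma~\ref{le1}, which is valid because $a = \min X$ and $\alpha \in \OP(X)\setminus\O(X)$, the minimum of $X_1\alpha$ equals $a\alpha$; comparing with Proposition~\ref{pr2} forces $c = a\alpha$. Dually, applying Lemma~\ref{le4}, valid because $b = \max X$, the maximum of $X_2\alpha$ equals $b\alpha$; comparing again forces $c = b\alpha$.

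Combining the two identifications gives $a\alpha = c = b\alpha$, which is the first assertion, and hence $X_1\alpha \cap X_2\alpha = \{c\} = \{a\alpha\}$, which is the second. I do not expect any real obstacle: the only thing to verify is that $X$ possesses both a minimum and a maximum so that each lemma genuinely applies, and this is immediate from the hypothesis $X = [a,b]$. All the substantive content resides in Proposition~\ref{pr2} and the two lemmas; the corollary merely specializes the common value $c$ furnished by the proposition to the images of the two coincident endpoints.
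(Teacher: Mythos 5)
Your proof is correct and follows exactly the route the paper intends: the paper states this corollary as an immediate consequence of Proposition~\ref{pr2}, Lemma~\ref{le1}, and Lemma~\ref{le4}, and your argument---identifying the common value $c$ from Proposition~\ref{pr2} with $a\alpha$ via Lemma~\ref{le1} and with $b\alpha$ via Lemma~\ref{le4}---is precisely that deduction, spelled out.
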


\section{$X$ Has a Minimum and a Maximum}

In this section, we consider the relative rank of the semigroup $\OP(X)$ modulo the semigroup $\O(X)$, whence the set $X$ has a minimum and a maximum. Recall that $a = \min X$, $b = \max X$, and $X = [a,b]$.

Let $c \in (a,b)$ and $d \in (c,b)$, i.e. $a < c < d < b$. Since any two convex subsets of $X$ of the type $(A,B)$ are order-isomorphic, we have that there are order-isomorphisms $\mu_1 : (a,c) \rightarrow (d,b)$ and $\mu_2 : (c,b) \rightarrow (a,c)$.

We define the transformation $\gamma : X \rightarrow X$ by
$$x\gamma = \left\{
             \begin{array}{ll}
               d, & x = a \\
               x\mu_1, & x \in (a,c) \\
               a, & x = c \\
               x\mu_2, & x \in (c,b) \\
               c, & x = b. \\
             \end{array}
           \right.$$
Since $\mu_1$ is an order-isomorphism and $a \leq y < c$ for all $y \in [a,c)$ as well as $d \leq y < b$ for all $y \in [d,b)$, we have that $\gamma|_{[a,c)}$ is order-preserving.
Since $\mu_2$ is an order-isomorphism and $c \leq y \leq b$ for all $y \in [c,b]$ as well as $a \leq y \leq c$ for all $y \in [a,c]$, we have that $\gamma|_{[c,b]}$ is order-preserving.
Moreover, for all $x_1 \in [a,c)$ and all $x_2 \in  [c,b]$, we have $x_1 < x_2$ and $x_2\gamma \leq c < d \leq x_1\gamma$ (since $x_2\gamma \in [a,c]$ and $x_1\gamma \in [d,b)$). Therefore, $\gamma \in \OP(X)$ with the ideal $X_1^* = [a,c)$. Clearly, $\gamma \notin \O(X)$ since $X_2^* = X \setminus X_1^* = [c,b] \neq \emptyset$.

\begin{theorem}\label{th1} \rm
$\rank(\OP(X) : \O(X)) = 1$.
\end{theorem}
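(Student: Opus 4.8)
The plan is to prove the two inequalities $\rank(\OP(X):\O(X))\ge 1$ and $\rank(\OP(X):\O(X))\le 1$ separately. For the lower bound I would note that $\O(X)$ is a subsemigroup of $\OP(X)$ (a composite of two order-preserving maps is order-preserving), so $\langle\O(X)\rangle=\O(X)$; since $\gamma\in\OP(X)\setminus\O(X)$ as established above, $\O(X)$ is not a generating set and at least one extra generator is needed. The upper bound is the substantial half: I will show that $\O(X)\cup\{\gamma\}$ generates $\OP(X)$ by proving that every $\alpha\in\OP(X)\setminus\O(X)$ can be written as $\alpha=\beta\gamma\delta$ with $\beta,\delta\in\O(X)$.

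Fix such an $\alpha$ with ideal $X_1$ and $X_2=X\setminus X_1$. Since $X_1$ is a nonempty order ideal it contains $a=\min X$, and dually $b=\max X\in X_2$; thus $X_1$ is convex with minimum $a$ and $X_2$ is convex with maximum $b$. First I would build $\beta\in\O(X)$ whose sole role is to move the orientation break of $\alpha$ onto the break of $\gamma$: using the standing homogeneity assumption (any two open convex subsets of type $(A,B)$ are order-isomorphic) together with the endpoint bookkeeping, I choose $\beta|_{X_1}$ to be an order-isomorphism of $X_1$ onto a convex subset of $[a,c)=X_1^*$ and $\beta|_{X_2}$ an order-isomorphism of $X_2$ onto a convex subset of $[c,b]=X_2^*$, matching the available endpoints ($a\mapsto a$, $b\mapsto b$, and the interior endpoints according to whether $X_1$ has a maximum and whether $X_2$ has a minimum). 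Because $X_1<X_2$ maps into $[a,c)<[c,b]$, the two pieces glue to an order-preserving $\beta$ on $X$. Composing with $\gamma$ then sends $X_1$ into $[d,b)$ with minimum $d=a\gamma$ and $X_2$ into $[a,c]$ with maximum $c=b\gamma$; in particular $\beta\gamma$ is injective on each of $X_1,X_2$, it carries the lower block $X_1$ to high values and the upper block $X_2$ to low values, and its break now sits exactly at the $X_1\mid X_2$ boundary.

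Finally I would recover the values of $\alpha$ with an order-preserving $\delta$. On $\im(\beta\gamma)$ define $\delta=(\beta\gamma)^{-1}\alpha$; that is, on the high part $X_1(\beta\gamma)\subseteq[d,b)$ let it agree with $(\beta\gamma|_{X_1})^{-1}\alpha|_{X_1}$ and on the low part $X_2(\beta\gamma)\subseteq[a,c]$ with $(\beta\gamma|_{X_2})^{-1}\alpha|_{X_2}$. The key point is that this is order-preserving: on each block it is an order-isomorphism followed by the order-preserving $\alpha|_{X_i}$, while across the blocks the low block maps into $X_2\alpha$ and the high block into $X_1\alpha$, and $X_2\alpha\le X_1\alpha$ precisely because $\alpha$ is orientation-preserving. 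Thus the orientation of $\alpha$ is converted into genuine order-preservation of $\delta$. One then extends $\delta$ to an order-preserving map on all of $X$ (the densely ordered gaps, such as $(c,d)$ and the point $b$, are filled by interpolation or by the adjacent seam values) and checks $\beta\gamma\delta=\alpha$ block by block. The hard part will be the bookkeeping at the seams: matching $d\delta=a\alpha$ with the minimum of $X_1\alpha$ (Lemma \ref{le1}), $c\delta=b\alpha$ with the maximum of $X_2\alpha$ (Lemma \ref{le4}), using Lemma \ref{le2} when $X_1$ has a maximum or $X_2$ has a minimum so that these go to the extreme values of $\im\alpha$, and, in the overlapping case $X_1\alpha\cap X_2\alpha\ne\emptyset$, invoking Corollary \ref{co3} (so that $a\alpha=b\alpha$) to guarantee that the two seam prescriptions $c\delta$ and $d\delta$ agree and $\delta$ remains single-valued and order-preserving through $(c,d)$. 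Handling the four endpoint configurations of $X_1$ and $X_2$ uniformly is the only delicate point; once it is in place, $\alpha=\beta\gamma\delta\in\langle\O(X)\cup\{\gamma\}\rangle$, which gives $\rank(\OP(X):\O(X))\le 1$ and hence, with the lower bound, equality.
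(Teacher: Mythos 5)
Your proposal is correct, and its core is the same as the paper's: adjoin the single generator $\gamma$ and transport the break of an arbitrary $\alpha\in\OP(X)\setminus\O(X)$ onto the break of $\gamma$ by order-preserving maps. Where you genuinely differ is in the uniformity of the factorization. The paper distinguishes cases according to whether $X_2$ has a minimum, $X_1$ has a maximum, or neither: in the first and third cases it writes $\alpha=\beta\gamma\delta$ essentially as you do (with $\beta$ an order-isomorphism of $X$ onto $X$, resp.\ onto $X\setminus\{c\}$, and $\delta$ given by explicit interval-by-interval formulas), but in the second case, where $X_1=[a,m]$ has a maximum, that $\beta$ would send $m$ to $c$, i.e.\ to the wrong side of $\gamma$'s break, and the paper repairs this with two further order-preserving factors, arriving at the five-factor product $\alpha=\beta\eta_1\gamma\eta_2\delta_1$. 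You avoid the case split entirely by letting $\beta$ map $X_1$ and $X_2$ order-isomorphically onto convex subsets (not necessarily all) of $[a,c)$ and $[c,b]$, and by defining $\delta$ abstractly as $(\beta\gamma)^{-1}\alpha$ on $\im(\beta\gamma)$; this yields one three-factor decomposition for all endpoint configurations, at the price of having to extend $\delta$ from $\im(\beta\gamma)$ to all of $X$. On that one point you should be more careful than your wording suggests: extension ``by interpolation'' is not available in general, since $X$ is not assumed Dedekind-complete, so the relevant suprema and infima need not exist in $X$. The extension must be by constants, which does suffice here: set $\delta$ equal to $a$ below the low block, equal to $b$ above the high block, and equal to $a\alpha$ (or $b\alpha$) on the middle gap $(c,d)$; this is order-compatible precisely because $c\delta=b\alpha\le a\alpha=d\delta$, by Lemmas \ref{le1} and \ref{le4} together with $X_2\alpha\le X_1\alpha$. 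Note also that your appeal to Corollary \ref{co3} is superfluous: $c$ and $d$ are distinct points, so the seam values $c\delta$ and $d\delta$ never need to coincide, only to be ordered. With the constant extensions spelled out, your argument is complete, and it is in fact shorter and more uniform than the paper's.
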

\begin{proof}
Let $\alpha \in \OP(X)\setminus\O(X)$ with ideal $X_1$.
Moreover, from $X = [a,b]$, Lemma \ref{le1} and \ref{le4}, it follows $a\alpha = \min X_1\alpha$ and $b\alpha = \max X_2\alpha$.

We will consider two cases:
\begin{enumerate}
  \item[(a)] For any decomposition $X = \tilde{X}_1 \cup \tilde{X}_2$ with $\tilde{X}_1 < \tilde{X}_2$ holds $\tilde{X}_1$ has a maximum or $\tilde{X}_2$ has a minimum.
  \item[(b)] There is a decomposition $X = \tilde{X}_1 \cup \tilde{X}_2$ with $\tilde{X}_1 < \tilde{X}_2$ such that $\tilde{X}_1$ has no maximum and $\tilde{X}_2$ has no minimum.
\end{enumerate}
Notice that the case $\tilde{X}_1$ has a maximum and $\tilde{X}_2$ has a minimum is not possible for a decomposition $\tilde{X}_1 < \tilde{X}_2$, since $X$ is densely ordered set.

\textbf{Case (a).} There is $m \in X$ such that $X_1 = [a,m]$, $X_2 = (X_1,b] = (m,b]$ or $X_1 = [a,X_2) = [a,m)$, $X_2 = [m,b]$.\\

First, we will consider the case $X_1 = [a,m)$ and $X_2 = [m,b]$, i.e.
$$\alpha = \left(
            \begin{array}{ccc}
              X_1 & < & X_2 \\
              X_1\alpha & \geq & X_2\alpha
            \end{array}
          \right) = \left(
            \begin{array}{ccc}
              [a,m) & < & [m,b] \\
              ~[a,m)\alpha & \geq & ~[m,b]\alpha
            \end{array}
          \right).$$

Notice that if $m=b$ then $X_2 = [b,b] = \{b\}$.

Since any two convex subsets of $X$ of the type $(A,B)$ are order-isomorphic, we have that there are order-isomorphisms $\lambda_1 : (a,m) \rightarrow (a,c)$ and $\lambda_2 : (m,b) \rightarrow (c,b)$.

We define the transformation $\beta : X \rightarrow X$ by
$$x\beta = \left\{
             \begin{array}{ll}
               a, & x = a \neq m \\
               x\lambda_1, & x \in (a,m) \\
               c, & x = m \\
               x\lambda_2, & x \in (m,b) \\
               b, & x = b \neq m. \\
             \end{array}
           \right.$$
Clearly, $\beta$ is an order-preserving bijection on $X$, i.e. $\beta \in \O(X)$.

Further, we define the transformation $\delta : X \rightarrow X$ by
$$x\delta = \left\{
             \begin{array}{ll}
               m\alpha, & x = a \\
               x\mu_2^{-1}\lambda_2^{-1}\alpha, & x \in (a,c) \\
               b\alpha, & x = c \\
               a\alpha, & x \in (c,d] \\
               x\mu_1^{-1}\lambda_1^{-1}\alpha, & x \in (d,b) \\
               b, & x = b. \\
             \end{array}
           \right.$$
Note that $\alpha \in \OP(X)\setminus\O(X)$ implies $X_2\alpha = [m,b]\alpha \leq X_1\alpha = [a,m)\alpha$. From Lemma \ref{le2}, we have $m\alpha = \min \im\alpha$. Thus, $m\alpha = \min X_2\alpha \leq (m,b)\alpha \leq b\alpha = \max X_2\alpha \leq a\alpha = \min X_1\alpha \leq (a,m)\alpha \leq b = \max X$. Moreover, $\lambda_i, \mu_i$, $i=1,2$, are order-isomorphisms and $\alpha |_{(m,b)}$, $\alpha |_{(a,m)}$ are order-preserving imply $\delta|_{(a,c)}$ and $\delta|_{(d,b)}$ are order-preserving.
Therefore, we obtain $\delta \in \O(X)$.

Now, we will show that $\alpha = \beta\gamma\delta$. Let $x \in X$.\\
If $x = a$ then $a\beta\gamma\delta = a\gamma\delta = d\delta = a\alpha$.\\
If $x \in (a,m)$ then $x\beta\gamma\delta = (x\lambda_1)\gamma\delta = (x\lambda_1\mu_1)\delta = x\lambda_1\mu_1\mu_1^{-1}\lambda_1^{-1}\alpha = x\alpha$, since $\mu_1\mu_1^{-1} = \id_{(a,c)}$, $\lambda_1\id_{(a,c)}\lambda_1^{-1} = \id_{(a,m)}$, and $x\id_{(a,m)}\alpha = x\alpha$.\\
If $x = m$ then $m\beta\gamma\delta = c\gamma\delta = a\delta = m\alpha$.\\
If $x \in (m,b)$ then $x\beta\gamma\delta = (x\lambda_2)\gamma\delta = (x\lambda_2\mu_2)\delta = x\lambda_2\mu_2\mu_2^{-1}\lambda_2^{-1}\alpha = x\alpha$, since $\mu_2\mu_2^{-1} = \id_{(c,b)}$, $\lambda_2\id_{(c,b)}\lambda_2^{-1} = \id_{(m,b)}$, and $x\id_{(m,b)}\alpha = x\alpha$.\\
If $x = b$ then $b\beta\gamma\delta = b\gamma\delta = c\delta = b\alpha$.

Therefore, we have $\alpha \in \langle \O(X),\gamma \rangle$.\\

Now, consider the case $X_1 = [a,m]$ and $X_2 = (m,b]$, i.e.
$$\alpha = \left(
            \begin{array}{ccc}
              X_1 & < & X_2 \\
              X_1\alpha & \geq & X_2\alpha
            \end{array}
          \right) = \left(
            \begin{array}{ccc}
              [a,m] & < & (m,b] \\
              ~[a,m]\alpha & \geq & ~(m,b]\alpha
            \end{array}
          \right).$$

Notice that if $a=m$ then $X_1 = [a,a] = \{a\}$.

Let $c' \in (a,c)$, i.e. $(a,c') \subseteq X_1^* = [a,c)$. Recall that $c < d < b$, i.e. $d \in X_2^*$ and $(c,d) \subseteq X_2^* = [c,b]$.

Since any two convex subsets of $X$ of the type $(A,B)$ are order-isomorphic, we have that there is an order-isomorphism $\tau_1 : (a,c) \rightarrow (a,c')$.

We define the transformation $\eta_1 : X \rightarrow X$ by
$$x\eta_1 = \left\{
             \begin{array}{ll}
               a, & x = a \\
               x\tau_1, & x \in (a,c) \\
               c', & x = c \\
               x, & x \in (c,b]. \\
             \end{array}
           \right.$$
Since $a\eta_1 = a = \min X < (a,c)\eta_1 = (a,c)\tau_1 = (a,c') < c' = c\eta_1 < (c,b) < b$, $\eta_1|_{(c,b]} = \id_{(c,b]}$, and $\tau_1$ is an order-isomorphism, we have that $\eta_1$ is an order-preserving transformation on $X$, i.e. $\eta_1 \in \O(X)$.

Further, since any two convex subsets of $X$ of the type $(A,B)$ are order-isomorphic, we have that there is an order-isomorphism $\tau_2 : (d,c'\mu_1) \rightarrow (d,b)$.

We define the transformation $\eta_2 : X \rightarrow X$ by
$$x\eta_2 = \left\{
             \begin{array}{ll}
               x, & x \in [a,c] \\
               d, & x \in (c,d] \\
               x\tau_2, & x \in (d,c'\mu_1) \\
               b, & x \in [c'\mu_1,b]. \\
             \end{array}
           \right.$$
Since $\eta_2|_{[a,c]} = \id_{[a,c]}$, $c < (c,d]\eta_2 = d < (d,c'\mu_1)\eta_2 = (d,c'\mu_1)\tau_2 = (d,b) < b = [c'\mu_1,b]\eta_2$, and $\tau_2$ is an order-isomorphism, we have that $\eta_2$ is an order-preserving transformation on $X$, i.e. $\eta_2 \in \O(X)$.

Since $\mu_1$ is an order-isomorphism, $x\gamma = x\mu_1$, for all $x \in (a,c)$, and $c' \in (a,c)$, we have $c'\gamma = c'\mu_1 \in (d,b)$ and thus $d < c'\gamma < b$. Therefore, we can write the transformation $\gamma$ in the following way:
$$\gamma =
\left(
  \begin{array}{ccccccccccccc}
    a & < & (a,c') & < & c' & < & (c',c) & < & c & < & (c,b) & < & b \\
    d & < & (a,c')\mu_1 & < & c'\mu_1 & < & (c',c)\mu_1 & > & a & < & (c,b)\mu_2 & < & c
  \end{array}
\right).$$

Further, we define the transformation $\delta_1 : X \rightarrow X$ by
$$x\delta_1 = \left\{
             \begin{array}{ll}
               a, & x = a \\
               x\mu_2^{-1}\lambda_2^{-1}\alpha, & x \in (a,c) \\
               b\alpha, & x = c \\
               a\alpha, & x \in (c,d] \\
               x\tau_2^{-1}\mu_1^{-1}\tau_1^{-1}\lambda_1^{-1}\alpha, & x \in (d,b) \\
               m\alpha, & x = b. \\
             \end{array}
           \right.$$
Since $\lambda_1, \lambda_2, \mu_1, \mu_2, \tau_1, \tau_2$ are order-isomorphisms, $a = \min X$, $\alpha |_{(m,b]}$ is order-preserving ($X_2=(m,b]$), $b\alpha = \max X_2\alpha$ (by Lemma \ref{le4}), $a\alpha = \min X_1\alpha$ (by Lemma \ref{le1}), $X_2\alpha \leq X_1\alpha$ ($\alpha \in \OP(X)$), $\alpha |_{[a,m]}$ is order-preserving ($X_1 = [a,m]$ is the ideal of $\alpha$), and $m\alpha = \max \im\alpha$ (by Lemma \ref{le2}), i.e. $\min X = a \leq X_2\alpha \leq b\alpha = \max X_2\alpha \leq a\alpha = \min X_1\alpha \leq X_1\alpha \leq m\alpha = \max \im\alpha$, it follows that $\delta_1 \in \O(X)$.

Finally, we will show that $\alpha = \beta\eta_1\gamma\eta_2\delta_1$. Let $x \in X$.\\
If $x = a \neq m$ then $a\beta\eta_1\gamma\eta_2\delta_1 = a\eta_1\gamma\eta_2\delta_1 = a\gamma\eta_2\delta_1 = d\eta_2\delta_1 = d\delta_1 = a\alpha$.\\
If $x \in (a,m)$ then $x\beta\eta_1\gamma\eta_2\delta_1 = (x\lambda_1)\eta_1\gamma\eta_2\delta_1 = (x\lambda_1\tau_1)\gamma\eta_2\delta_1 = (x\lambda_1\tau_1\mu_1)\eta_2\delta_1 = (x\lambda_1\tau_1\mu_1\tau_2)\delta_1 = x\lambda_1\tau_1\mu_1\tau_2\tau_2^{-1}\mu_1^{-1}\tau_1^{-1}\lambda_1^{-1}\alpha$.
Since $\tau_2\tau_2^{-1} = \id_{(d,c'\mu_1)}$,\\
$\mu_1|_{(a,c')}\id_{(d,c'\mu_1)}\mu_1^{-1} = \id_{(a,c')}$, $\tau_1\id_{(a,c')}\tau_1^{-1} = \id_{(a,c)}$, and $\lambda_1\id_{(a,c)}\lambda_1^{-1} = \id_{(a,m)}$,
we have $x\lambda_1\tau_1\mu_1\tau_2\tau_2^{-1}\mu_1^{-1}\tau_1^{-1}\lambda_1^{-1}\alpha = x\id_{(a,m)}\alpha = x\alpha$.\\
If $x = m$ then $m\beta\eta_1\gamma\eta_2\delta_1 = c\eta_1\gamma\eta_2\delta_1 = c'\gamma\eta_2\delta_1 = (c'\mu_1)\eta_2\delta_1 = b\delta_1 = m\alpha$.\\
If $x \in (m,b)$ then $x\beta\eta_1\gamma\eta_2\delta_1 = (x\lambda_2)\eta_1\gamma\eta_2\delta_1 = (x\lambda_2)\gamma\eta_2\delta_1 = (x\lambda_2\mu_2)\eta_2\delta_1 = (x\lambda_2\mu_2)\delta_1 = x\lambda_2\mu_2\mu_2^{-1}\lambda_2^{-1}\alpha = x\alpha$, since $\mu_2\mu_2^{-1} = \id_{(c,b)}$, $\lambda_2\id_{(c,b)}\lambda_2^{-1} = \id_{(m,b)}$, and $x\id_{(m,b)}\alpha = x\alpha$.\\
If $x = b$ then $b\beta\eta_1\gamma\eta_2\delta_1 = b\eta_1\gamma\eta_2\delta_1 = b\gamma\eta_2\delta_1 = c\eta_2\delta_1 = c\delta_1 = b\alpha$.

Therefore, we have $\alpha \in \langle \O(X),\gamma \rangle$.\\

\textbf{Case (b).} There is a decomposition $X = X_1 \cup X_2$ with $X_1 < X_2$ such that $X_1$ has no maximum and $X_2$ has no minimum.
Let $\alpha \in \OP(X)\setminus \O(X)$ with ideal $X_1$. Then $X_1 = [a,X_2)$ and $X_2 = (X_1,b]$, i.e.
$$\alpha = \left(
            \begin{array}{ccc}
              X_1 & < & X_2 \\
              X_1\alpha & \geq & X_2\alpha
            \end{array}
          \right) = \left(
            \begin{array}{ccc}
              [a,X_2) & < & (X_1,b] \\
              ~[a,X_2)\alpha & \geq & ~(X_1,b]\alpha
            \end{array}
          \right).$$

Since any two convex subsets of $X$ of the type $(A,B)$ are order-isomorphic, we have that there are order-isomorphisms $\lambda_3 : (a,X_2) \rightarrow (a,c)$ and $\lambda_4 : (X_1,b) \rightarrow (c,b)$.

We define the transformation $\theta_1 : X \rightarrow X$ by
$$x\theta_1 = \left\{
             \begin{array}{ll}
               a, & x = a \\
               x\lambda_3, & x \in (a,X_2) \\
               x\lambda_4, & x \in (X_1,b) \\
               b, & x = b. \\
             \end{array}
           \right.$$
Clearly, $\theta_1$ is an order-preserving transformation on $X$, i.e. $\theta_1 \in \O(X)$.

Recall that $c < d < b$, i.e. $d \in X_2^*$ and $(c,d) \subseteq X_2^* = [c,b]$.
We define the transformation $\theta_2 : X \rightarrow X$ by
$$x\theta_2 = \left\{
             \begin{array}{ll}
               a, & x = a \\
               x\mu_2^{-1}\lambda_4^{-1}\alpha, & x \in (a,c) \\
               b\alpha, & x = c \\
               a\alpha, & x \in (c,d] \\
               x\mu_1^{-1}\lambda_3^{-1}\alpha, & x \in (d,b) \\
               b, & x = b. \\
             \end{array}
           \right.$$
Since $\mu_1$, $\mu_2$, $\lambda_3$, $\lambda_4$ are order-isomorphisms, $a = \min X \leq (X_1,b)\alpha \leq b\alpha \leq a\alpha \leq (a,X_2)\alpha \leq b$, $\alpha |_{(X_1,b]}$ is order-preserving ($X_2=(X_1,b]$), and $\alpha |_{[a,X_2)}$ is order-preserving ($X_1 = [a,X_2)$ is the ideal of $\alpha$), it follows that $\theta_2 \in \O(X)$.

Now, we will show that $\alpha = \theta_1\gamma\theta_2$. Let $x \in X$.\\
If $x = a$ then $a\theta_1\gamma\theta_2 = a\gamma\theta_2 = d\theta_2 = a\alpha$.\\
If $x \in (a,X_2)$ then $x\theta_1\gamma\theta_2 = (x\lambda_3)\gamma\theta_2 = (x\lambda_3\mu_1)\theta_2 = x\lambda_3\mu_1\mu_1^{-1}\lambda_3^{-1}\alpha = x\alpha$, since $\mu_1\mu_1^{-1} = \id_{(a,c)}$, $\lambda_3\id_{(a,c)}\lambda_3^{-1} = \id_{(a,X_2)}$, and $x\id_{(a,X_2)}\alpha = x\alpha$.\\
If $x \in (X_1,b)$ then $x\theta_1\gamma\theta_2 = (x\lambda_4)\gamma\theta_2 = (x\lambda_4\mu_2)\theta_2 = x\lambda_4\mu_2\mu_2^{-1}\lambda_4^{-1}\alpha = x\alpha$, since $\mu_2\mu_2^{-1} = \id_{(c,b)}$, $\lambda_4\id_{(c,b)}\lambda_4^{-1} = \id_{(X_1,b)}$, and $x\id_{(X_1,b)}\alpha = x\alpha$.\\
If $x = b$ then $b\theta_1\gamma\theta_2 = b\gamma\theta_2 = c\theta_2 = b\alpha$.\\
Therefore, we have $\alpha \in \langle \O(X),\gamma \rangle$.\\

Altogether, we obtain $\OP(X) = \langle \O(X),\gamma \rangle$ and since $\O(X)$ is a proper submonoid of $\OP(X)$ we have $\rank(\OP(X) : \O(X)) = 1$.
\end{proof}

\section{$X$ Has a Minimum but no Maximum}

In this section, we consider the relative rank of the semigroup $\OP(X)$ modulo the semigroup $\O(X)$, whence the set $X$ has a minimum $a$ but no maximum, i.e. $X = [a,\emptyset)$.

Let $b \in X$, with $a < b$ and let $Y = [a,b] \subset X$. Then $\OP(Y)$ is the semigroup of all orientation-preserving transformations of $Y$ and by Theorem \ref{th1}, we have $\OP(Y) = \langle \O(Y), \gamma \rangle$.
In fact, we will assume that $\gamma$ is defined as in Section 2, where the set $X$ is replaced
by the set $Y$ and $d \in Y$ is given. Recall also that
$\mu_1 : (a,c) \rightarrow (d,b)$ and $\mu_2 : (c,b) \rightarrow (a,c)$ are order-isomorphisms.

Let us put
$$\OP^*(X) = \{\beta \in \OP(X) : \im \beta \subseteq [a,b] \mbox{ and } |[b,\emptyset)\beta| = 1\}.$$

\begin{lemma}\label{le6} \rm
$\OP^*(X)$ is isomorphic to $\OP(Y)$.
\end{lemma}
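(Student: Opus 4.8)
The plan is to exhibit the restriction map $\Phi : \OP^*(X) \to \OP(Y)$, given by $\beta\Phi = \beta|_Y$, as a semigroup isomorphism, whose inverse is the map that extends a transformation of $Y$ by a constant tail. First I would record that $\OP^*(X)$ is genuinely a subsemigroup of $\OP(X)$, so that the statement makes sense: for $\beta,\beta' \in \OP^*(X)$ we have $\beta\beta' \in \OP(X)$, its image satisfies $\im(\beta\beta') = (\im\beta)\beta' \subseteq Y\beta' \subseteq \im\beta' \subseteq [a,b]$, and writing $[b,\emptyset)\beta = \{z\}$ gives $[b,\emptyset)(\beta\beta') = \{z\beta'\}$, again a singleton. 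Since $\im\beta \subseteq [a,b] = Y$, each $\beta|_Y$ is a full transformation of $Y$, so $\Phi$ at least has the correct target set-theoretically.

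Next I would verify that $\Phi$ is well defined, i.e.\ $\beta|_Y \in \OP(Y)$ for every $\beta \in \OP^*(X)$. If $\beta$ is constant this is immediate. Otherwise $\beta$ has a unique ideal $X_1$, and I would put $Y_1 = X_1 \cap Y$ and $Y_2 = X_2 \cap Y = Y\setminus Y_1$. As $X_1$ is an order ideal containing $a = \min X$, we get $a \in Y_1$, so $Y_1 \neq \emptyset$ and $Y_1$ is an order ideal of $Y$. If $Y_2 = \emptyset$ then $Y \subseteq X_1$ and $\beta|_Y$ is order-preserving, hence lies in $\O(Y) \subseteq \OP(Y)$; if $Y_2 \neq \emptyset$, then conditions (1) and (2) for $\beta|_Y$ with ideal $Y_1$ follow at once by restricting the corresponding conditions for $\beta$ with ideal $X_1$, so $\beta|_Y \in \OP(Y)$. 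This case analysis, which confirms that restricting an orientation-preserving map cannot destroy the orientation-preserving property, is the step I expect to need the most care.

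That $\Phi$ is a homomorphism is then a short computation that relies crucially on $\im\beta \subseteq Y$: for $y \in Y$ we have $y\beta \in Y$, whence $y(\beta\beta') = (y\beta)\beta' = (y\beta)(\beta'|_Y) = \bigl(y(\beta|_Y)\bigr)(\beta'|_Y)$, giving $(\beta\beta')|_Y = (\beta|_Y)(\beta'|_Y)$.

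Finally I would construct the inverse $\Psi : \OP(Y) \to \OP^*(X)$. For $\sigma \in \OP(Y)$ define $\sigma\Psi = \hat\sigma$ by $x\hat\sigma = x\sigma$ for $x \in Y$ and $x\hat\sigma = b\sigma$ for $x \in (b,\emptyset)$, i.e.\ extend $\sigma$ to be constant with value $b\sigma$ on the whole tail $[b,\emptyset)$. Then $\im\hat\sigma = \im\sigma \subseteq [a,b]$ and $[b,\emptyset)\hat\sigma = \{b\sigma\}$, so the image and tail conditions hold, and it remains to see $\hat\sigma \in \OP(X)$. If $\sigma \in \O(Y)$ then $\hat\sigma$ is order-preserving on $X$ (the tail value $b\sigma = \max\im\sigma$), so $\hat\sigma \in \O(X)$; if $\sigma \in \OP(Y)\setminus\O(Y)$ with ideal $Y_1$, then $\max Y = b \in Y_2$ and I would check that $\hat\sigma$ is orientation-preserving with ideal $X_1 = Y_1$, the key point being that for $p \in Y_1$ and any tail element $q > b$ we have $p\hat\sigma = p\sigma \geq b\sigma = q\hat\sigma$ by the orientation-preserving property of $\sigma$. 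A direct check then yields $\beta\Phi\Psi = \beta$ for all $\beta \in \OP^*(X)$ (using that $\beta$ is already constant on $[b,\emptyset)$ with value $b\beta$) and $\sigma\Psi\Phi = \sigma$ for all $\sigma \in \OP(Y)$, so $\Phi$ is a bijection. Being a bijective homomorphism, $\Phi$ is an isomorphism, and the lemma follows.
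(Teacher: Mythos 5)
Your proof is correct and takes essentially the same route as the paper: your extension map $\Psi$ is precisely the paper's isomorphism $h : \OP(Y) \rightarrow \OP^*(X)$, and you merely present the correspondence from the restriction side, verifying the homomorphism property in the equivalent direction. The only difference is thoroughness --- you explicitly check well-definedness (that restriction lands in $\OP(Y)$ and that the constant-tail extension lands in $\OP(X)$), steps the paper leaves implicit.
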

\begin{proof}
We consider the mapping $h : \OP(Y) \rightarrow \OP^*(X)$, defined by
$$xh(\alpha) = \left\{
             \begin{array}{ll}
               x\alpha, & x \in [a,b) \\
               b\alpha, & x \in [b,\emptyset), \\
             \end{array}
           \right.$$
for $\alpha \in \OP(Y)$.
Since $\beta \in \OP^*(X)$ is uniquely determined by the image of $Y$ and
$xh(\alpha) = x\alpha$ for all $x \in Y = [a,b]$, we can conclude that $h$ is a bijection.

Now, let $\alpha, \beta \in \OP(Y)$. We will show that $h(\alpha\beta) = h(\alpha)h(\beta)$. Since $\alpha \in \OP(Y)$, we have $x\alpha \in Y = [a,b]$ for all $x \in Y$.
Let $x \in [a,b)$.
Then $xh(\alpha\beta) = x(\alpha\beta) = (x\alpha)\beta = (x\alpha)h(\beta) = xh(\alpha)h(\beta)$.
Let $x \in [b,\emptyset)$.
Then $xh(\alpha\beta) = b(\alpha\beta) = (b\alpha)\beta = (b\alpha)h(\beta) = xh(\alpha)h(\beta)$.

Therefore, $h$ is an isomorphism and thus, $\OP^*(X)$ is isomorphic to $\OP(Y)$.
\end{proof}

Let $\O^*(X) = \OP^*(X) \cap \O(X)$. Then from Theorem \ref{th1} and Lemma \ref{le6}, we have:
\begin{corollary}\label{co6} \rm
  $\OP^*(X) = \langle \O^*(X), h(\gamma) \rangle$.
\end{corollary}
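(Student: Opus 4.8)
The plan is to transport the generation result for $\OP(Y)$ through the isomorphism $h$ established in Lemma \ref{le6}. By Theorem \ref{th1} we already know that $\OP(Y) = \langle \O(Y), \gamma \rangle$, and by Lemma \ref{le6} the map $h : \OP(Y) \rightarrow \OP^*(X)$ is a semigroup isomorphism. Since isomorphisms preserve the operation of taking the generated subsemigroup, the key identity to exploit is that $h(\langle S \rangle) = \langle h(S) \rangle$ for any subset $S \subseteq \OP(Y)$.

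First I would record the standard fact that for any semigroup isomorphism $h$ and any generating set, one has $h(\langle \O(Y), \gamma \rangle) = \langle h(\O(Y)), h(\gamma) \rangle$. Applying $h$ to both sides of $\OP(Y) = \langle \O(Y), \gamma \rangle$ then yields
$$\OP^*(X) = h(\OP(Y)) = \langle h(\O(Y)), h(\gamma) \rangle.$$
The remaining point is to identify $h(\O(Y))$ with $\O^*(X)$. Here I would argue that $h$ maps order-preserving transformations of $Y$ to order-preserving transformations of $X$ lying in $\OP^*(X)$, and conversely. Concretely, for $\alpha \in \O(Y)$, the extension $h(\alpha)$ agrees with $\alpha$ on $[a,b)$ and is constant (equal to $b\alpha = \max \im\alpha$) on $[b,\emptyset)$; since $\alpha$ is order-preserving on $Y$ and this constant value dominates every value taken on $[a,b)$, the transformation $h(\alpha)$ is order-preserving on all of $X$, so $h(\alpha) \in \O(X) \cap \OP^*(X) = \O^*(X)$. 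For the reverse inclusion, any element of $\O^*(X) = \OP^*(X) \cap \O(X)$ is the image under $h$ of some $\beta \in \OP(Y)$ (as $h$ is onto $\OP^*(X)$), and restricting an order-preserving $X$-transformation to $Y$ shows its $h$-preimage $\beta$ is order-preserving on $Y$, i.e. $\beta \in \O(Y)$. This gives $h(\O(Y)) = \O^*(X)$.

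Combining the two displayed facts, I would substitute $h(\O(Y)) = \O^*(X)$ into the expression for $\OP^*(X)$ to conclude $\OP^*(X) = \langle \O^*(X), h(\gamma) \rangle$, which is exactly the claim.

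The main obstacle, such as it is, lies in the identification $h(\O(Y)) = \O^*(X)$ rather than in the abstract transport of generators, which is purely formal. The delicate direction is verifying that the constant-on-the-tail extension $h(\alpha)$ of an order-preserving map preserves order globally; this rests on the observation that the constant value $b\alpha$ is the maximum of $\im\alpha$ (so it sits above all earlier values), together with the fact that $[b,\emptyset)$ lies entirely to the right of $[a,b)$. Once this order-preservation is checked, both inclusions for $h(\O(Y)) = \O^*(X)$ follow cleanly from the bijectivity of $h$ already proved in Lemma \ref{le6}.
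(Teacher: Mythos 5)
Your proposal is correct and takes essentially the same approach as the paper: the paper states this corollary without a written proof, as an immediate consequence of Theorem \ref{th1} and Lemma \ref{le6}, and your argument of transporting the generating set $\{\O(Y), \gamma\}$ through the isomorphism $h$ is precisely that implicit justification. The one piece the paper leaves entirely tacit, the identification $h(\O(Y)) = \O^*(X)$ (including the check that the constant-tail extension of an order-preserving map on $Y$ remains order-preserving on $X$), is exactly what you verify, and your verification is sound.
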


Since any two convex subsets of $X$ of the type $(A,B)$ are order-isomorphic, we have that there is an order-isomorphism $\nu : (a,c) \rightarrow (c,\emptyset)$.

We define the transformation $\gamma^* : X \rightarrow X$ by
$$x\gamma^* = \left\{
             \begin{array}{ll}
               c, & x = a \\
               x\nu, & x \in (a,c) \\
               a, & x = c \\
               x\nu^{-1}, & x \in (c,\emptyset). \\
             \end{array}
           \right.$$
Since $\nu$ is an order-isomorphism, $c < x\nu \in (c,\emptyset)$, and $a < x\nu^{-1} \in (a,c)$, we have that $\gamma^*|_{[a,c)}$ and
$\gamma^*|_{[c,\emptyset)}$ are order-preserving.
Moreover, for all $x_1 \in [a,c)$ and $x_2 \in  [c,\emptyset)$, we have $x_1 < x_2$ and $x_2\gamma^* < c \leq x_1\gamma^*$ (since $x_2\gamma^* \in [a,c)$ and $x_1\gamma^* \in [c,\emptyset)$). Therefore, $\gamma^* \in \OP(X)\setminus\O(X)$ with the same ideal $[a,c)$ as the transformation $\gamma \in \OP(Y)\setminus\O(Y)$. Moreover, $\gamma^*$ is an injective orientation-preserving transformation on $X$ and $\im \gamma^* = [a,c) \cup [c,\emptyset) = X$.

\begin{proposition}\label{pr3} \rm
$\OP^*(X) \subseteq \langle \O(X), \gamma^* \rangle$.
\end{proposition}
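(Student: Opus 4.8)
The plan is to reduce the containment to a single membership statement and then realize $h(\gamma)$ as a product of $\gamma^*$ with one order-preserving transformation.

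First I would invoke Corollary \ref{co6}, which gives $\OP^*(X) = \langle \O^*(X), h(\gamma)\rangle$. Since $\O^*(X) = \OP^*(X) \cap \O(X) \subseteq \O(X)$, every generator in $\O^*(X)$ already lies in $\langle \O(X), \gamma^*\rangle$. As $\langle \O(X), \gamma^*\rangle$ is a subsemigroup, it therefore suffices to prove the single membership $h(\gamma) \in \langle \O(X), \gamma^*\rangle$: once this holds, the generating set $\O^*(X) \cup \{h(\gamma)\}$ of $\OP^*(X)$ is contained in $\langle \O(X), \gamma^*\rangle$, and the containment $\OP^*(X) \subseteq \langle \O(X), \gamma^*\rangle$ follows at once.

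Next I would compare the two transformations directly. Using Lemma \ref{le6} and the formula for $\gamma$, one checks that $h(\gamma)$ has ideal $[a,c)$ and acts by $a \mapsto d$, $(a,c) \to (d,b)$ via $\mu_1$, $c \mapsto a$, $(c,b) \to (a,c)$ via $\mu_2$, and $[b,\emptyset) \mapsto c$ (constant), whereas $\gamma^*$ has the same ideal $[a,c)$ and acts by $a \mapsto c$, $(a,c) \to (c,\emptyset)$ via $\nu$, $c \mapsto a$, $(c,\emptyset) \to (a,c)$ via $\nu^{-1}$. The idea is to let $\gamma^*$ carry out the orientation reversal and to clean up afterwards with a single order-preserving postfactor. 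Concretely, setting $b' := b\nu^{-1} \in (a,c)$, I would define $\psi \in \T(X)$ by $a\psi = a$, $x\psi = x\nu\mu_2$ for $x \in (a,b')$, $x\psi = c$ for $x \in [b',c)$, $c\psi = d$, and $x\psi = x\nu^{-1}\mu_1$ for $x \in (c,\emptyset)$. One then verifies $\gamma^*\psi = h(\gamma)$ by a short case check on the blocks $\{a\}$, $(a,c)$, $\{c\}$, $(c,b)$, $\{b\}$, $(b,\emptyset)$; the identities $\nu\nu^{-1} = \id_{(a,c)}$ and $\nu^{-1}\nu = \id_{(c,\emptyset)}$ make each increasing block collapse to the prescribed value of $h(\gamma)$, while the points $b$ and $(b,\emptyset)$, which $\gamma^*$ sends into $[b',c)$, are all sent to $c$ by $\psi$.

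The main work, and the step where the construction could fail, is confirming that $\psi$ is order-preserving, i.e. $\psi \in \O(X)$. On $(a,b')$ the composition $\nu\mu_2$ is increasing into $(a,c)$ and tends to $c$ as $x \to b'$, and on $(c,\emptyset)$ the composition $\nu^{-1}\mu_1$ is increasing into $(d,b)$; the delicate part is the constant block $[b',c) \mapsto \{c\}$, which must sit monotonically between the increasing piece ending near $c$ and the jump $c\psi = d$. Checking the seams at $b'$ and at $c$ — where $x\psi = c$ stays constant on $[b',c)$ and then jumps to $c\psi = d > c$ — shows monotonicity is preserved throughout. Conceptually this constant block is exactly what produces the non-injective collapse of $[b,\emptyset)$ present in $h(\gamma)$ but impossible for the bijection $\gamma^*$; transferring this non-injectivity entirely to the order-preserving factor $\psi$ is the crux of the argument. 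Once $\psi \in \O(X)$ is established, $h(\gamma) = \gamma^*\psi \in \langle \O(X), \gamma^*\rangle$ and the proposition follows by the reduction above.
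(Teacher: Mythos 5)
Your proposal is correct and takes essentially the same approach as the paper: reduce via Corollary \ref{co6} to the single membership $h(\gamma) \in \langle \O(X), \gamma^* \rangle$, then realize $h(\gamma) = \gamma^*\psi$ with one order-preserving postfactor. Your $\psi$ is precisely the paper's transformation $\delta$ --- the paper describes the split of $(a,c)$ by the conditions $x\nu \in (c,b)$ versus $x\nu \in [b,\emptyset)$, which is exactly your split into $(a,b')$ and $[b',c)$ with $b' = b\nu^{-1}$ --- and your verification of $\psi \in \O(X)$ and the block-by-block check of $\gamma^*\psi = h(\gamma)$ match the paper's argument.
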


\begin{proof}
Since $\OP^*(X) = \langle \O^*(X), h(\gamma) \rangle$ (by Corollary \ref{co6}) and $\O^*(X) \subseteq \O(X)$, it remains to show that $h(\gamma) \in \langle \O(X), \gamma^* \rangle$.

We define the transformation $\delta: X \rightarrow X$ by
$$x\delta = \left\{
             \begin{array}{ll}
               a, & x = a \\
               x\nu\mu_2, & x \in (a,c) \mbox{ and } x\nu \in (c,b)\\
               c, & x \in (a,c) \mbox{ and } x\nu \in [b,\emptyset)\\
               d, & x = c \\
               x\nu^{-1}\mu_1, & x \in (c,\emptyset). \\
             \end{array}
           \right.$$
Since $\nu, \mu_1, \mu_2$ are order-isomorphisms, $x\nu\mu_2 \in (a,c)$, for $x \in (a,c)$, $x\nu \in (c,b)$, and $x\nu^{-1}\mu_1 \in (d,b)$, for $x \in (c,\emptyset)$, i.e. $$a <  x\nu\mu_2 < c < d < x\nu^{-1}\mu_1,$$ it follows that $\delta \in \O(X)$.

Now, we will show that $h(\gamma) = \gamma^*\delta$. Let $x \in X$.\\
If $x = a$ then $a\gamma^*\delta = c\delta = d = a\gamma = ah(\gamma)$.\\
If $x \in (a,c)$ then $x\gamma^*\delta = x\nu\delta = x\nu\nu^{-1}\mu_1 = x\mu_1 = x\gamma = xh(\gamma)$, since $\nu\nu^{-1} = \id_{(a,c)}$ and $x\id_{(a,c)}\mu_1 = x\mu_1$.\\
If $x = c$ then $c\gamma^*\delta = a\delta = a = c\gamma = ch(\gamma)$.\\
If $x \in (c,b)$ then $x\gamma^*\delta = x\nu^{-1}\delta = x\nu^{-1}\nu\mu_2 = x\mu_2 = x\gamma = xh(\gamma)$, since $(\nu^{-1}\nu)|_{(c,b)} = \id_{(c,b)}$ and $x\id_{(c,b)}\mu_2 = x\mu_2$.\\
If $x \in [b,\emptyset)$ then $x\gamma^*\delta = x\nu^{-1}\delta = c = b\gamma = xh(\gamma)$.

Therefore, we obtain $h(\gamma) = \gamma^*\delta \in \langle \O(X),\gamma^* \rangle$ and thus, $\OP^*(X) = \langle \O^*(X), h(\gamma) \rangle \subseteq \langle \O(X), \gamma^* \rangle$.
\end{proof}

\begin{theorem}\label{th2} \rm
$\rank(\OP(X) : \O(X)) = 1$.
\end{theorem}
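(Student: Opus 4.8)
The plan is to prove the two inequalities separately. For the lower bound, I would observe that $\O(X)$ is closed under composition and is a proper subsemigroup of $\OP(X)$ (it omits $\gamma^*$, which lies in $\OP(X)\setminus\O(X)$), so $\langle \O(X)\rangle = \O(X) \subsetneq \OP(X)$ and hence $\rank(\OP(X):\O(X)) \geq 1$. For the upper bound I would show $\OP(X) = \langle \O(X), \gamma^* \rangle$. By Proposition \ref{pr3} we already have $\OP^*(X) \subseteq \langle \O(X), \gamma^* \rangle$, so it remains to take an arbitrary $\alpha \in \OP(X)\setminus\O(X)$ with ideal $X_1$ and to express it using elements of $\O(X)$ together with a single element of $\OP^*(X)$; once that is done, $\alpha \in \langle \O(X),\gamma^*\rangle$ follows immediately.

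The key construction is a factorization $\alpha = \sigma\beta\tau$ with $\sigma,\tau \in \O(X)$ and $\beta \in \OP^*(X)$. First I would choose an injective $\sigma \in \O(X)$ that compresses the (possibly unbounded) ideal $X_1$ into the lower part of $Y=[a,b]$ and the co-ideal $X_2$ into the upper part, so that $X_1\sigma$ and $X_2\sigma$ give a decomposition of a subset of $Y$; such an order-embedding exists because any two convex subsets of $X$ of type $(A,B)$ are order-isomorphic, while the distinguished elements $a$, together with the maximum of $X_1$ or the minimum of $X_2$ when these exist, can be placed by hand exactly as in the piecewise definitions of Section 2. Next I would pick $\beta \in \OP^*(X)$ with ideal $X_1\sigma$ reproducing the single orientation-preserving decomposition $X_2\sigma\beta \leq X_1\sigma\beta$ inside $Y$; since $\OP^*(X)\cong\OP(Y)$ (Lemma \ref{le6}) and $\OP^*(X)=\langle\O^*(X),h(\gamma)\rangle$ (Corollary \ref{co6}), there is enough freedom to realize such a $\beta$. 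Finally I would build $\tau \in \O(X)$ expanding the bounded image $\im\beta\subseteq Y$ back onto $\im\alpha$, sending the lower block $X_2\sigma\beta$ into $X_2\alpha$ and the upper block $X_1\sigma\beta$ into $X_1\alpha$; this $\tau$ is order-preserving precisely because $X_2\alpha \leq X_1\alpha$ and, by Lemma \ref{le1}, $a\alpha = \min X_1\alpha$, so the two value-blocks sit in the correct order. Checking $\alpha = \sigma\beta\tau$ is then a routine verification on the pieces $\{a\}$, the interior of $X_1$, the interior of $X_2$, and the endpoints.

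The main obstacle is the order-preserving reconstruction of $\alpha$ from a single flip: one must ensure $\tau$ stays globally order-preserving across the junction between the images of $X_1$ and $X_2$, where the potential overlap point from Proposition \ref{pr2} and Corollary \ref{co1} (namely $X_1\alpha \cap X_2\alpha = \{a\alpha\}$) has to be matched exactly. The delicate subcase, exactly as in Case (b) of Theorem \ref{th1}, is when $X_1$ has no maximum and $X_2$ has no minimum: there is no boundary element to anchor $\sigma$, so the decomposition $X_1\sigma < X_2\sigma$ must be produced purely from order-isomorphisms of $(A,B)$-type subsets, and it is the combination $a\alpha=\min X_1\alpha$ (Lemma \ref{le1}) with $X_2\alpha \leq X_1\alpha$ that guarantees the construction closes up. Once the factorization is verified in all subcases, every $\alpha \in \OP(X)\setminus\O(X)$ lies in $\langle \O(X),\gamma^*\rangle$, yielding $\OP(X)=\langle\O(X),\gamma^*\rangle$ and hence $\rank(\OP(X):\O(X)) = 1$.
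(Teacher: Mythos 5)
Your overall architecture is the same as the paper's: compress the domain into $Y=[a,b]$ by an order-embedding, perform the orientation-reversing ``flip'' inside $Y$ by an element of $\OP^*(X)$ (which Proposition \ref{pr3} places in $\langle \O(X),\gamma^*\rangle$), and then decompress the bounded image back onto $\im\alpha$. This matches the paper's factorization $\beta=\theta_1\beta^*\theta_2$ with $\beta^*=\eta_1\beta\eta_2\in\OP^*(X)$. The lower bound argument is also fine.

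However, there is a genuine gap in the last step: you assert that the decompression map $\tau$ can be taken in $\O(X)$, and this is false whenever $\im\alpha$ is cofinal (unbounded above) in $X$ --- for instance $\alpha=\gamma^*$ itself, whose image is all of $X$. Indeed, $\tau$ must be a \emph{total} transformation of $X$; since $\im\beta\subseteq[a,b]$ and $X$ has no maximum, pick any $z>b$. Order-preservation forces $y\tau\leq z\tau$ for every $y\in\im\beta$, so $z\tau$ would be an upper bound of $\im\alpha$, which does not exist. So no order-preserving total $\tau$ with $(\im\beta)\tau=\im\alpha$ exists in this case, and your factorization $\alpha=\sigma\beta\tau$ with $\sigma,\tau\in\O(X)$ breaks down. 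This is exactly the point where the paper has to spend its second use of $\gamma^*$: writing $I$ for the convex hull of $\im\beta$, in the case $(I,\emptyset)=\emptyset$ the decompression is taken as $\theta_2=\theta_{2,1}\gamma^*\theta_{2,2}\in\langle\O(X),\gamma^*\rangle$ rather than in $\O(X)$; here $\theta_{2,1}$ squeezes $Y'$ cofinally into $[a,c)$, the injective map $\gamma^*$ carries $[a,c)$ onto the cofinal set $[c,\emptyset)$, and $\theta_{2,2}$ can then be a total order-preserving map onto $I$ precisely because nothing lies above a cofinal set. Note also that the subcase you flag as delicate (ideal with no maximum, co-ideal with no minimum) is not the real difficulty in this theorem --- it is already absorbed inside $\OP^*(X)\cong\OP(Y)$ by Theorem \ref{th1} and Proposition \ref{pr3}; the cofinal-image obstruction is the one your argument misses, and repairing it requires invoking $\gamma^*$ (or some non-order-preserving map) in the decompression step, not merely a more careful choice of order-isomorphisms.
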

\begin{proof}
Let $\beta \in \OP(X)\setminus\O(X)$.

Since any two convex subsets of $X$ of the type $(A,B)$ are order-isomorphic, we have that there is an order-isomorphism $\tau_1 : (a,b) \rightarrow (a,\emptyset)$.

We define the transformation $\eta_1 : X \rightarrow X$ by
$$x\eta_1 = \left\{
             \begin{array}{ll}
               a, & x = a \\
               x\tau_1, & x \in (a,b) \\
               a, & x \in [b,\emptyset). \\
             \end{array}
           \right.$$
Clearly, $\eta_1$ is an orientation-preserving transformation of $X$ with ideal $[a,b)$, i.e. $\eta_1 \in \OP(X)$.

Further, let $I \subseteq X$ be the smallest convex subset of $X$ containing $\im \beta$, i.e. $\im \beta \subseteq I = \bigcap\{H : H$ is a convex subset of $X$ with $\im\beta \subseteq H\}$. Let $Y' \subseteq Y = [a,b]$ be such that
\begin{description}
  \item[-] if $I$ has a minimum and a maximum then $Y' = [a,b]$;
  \item[-] if $I$ has a minimum but no maximum then $Y' = [a,b)$;
  \item[-] if $I$ has no minimum but a maximum then $Y' = (a,b]$;
  \item[-] if $I$ has no minimum and no maximum then $Y' = (a,b)$.
\end{description}

Then there is an order-isomorphism $\tau_2 : I \rightarrow Y'$ and we define the transformation $\eta_2 : X \rightarrow X$ by
$$x\eta_2 = \left\{
             \begin{array}{ll}
               a, & x \in [a,I) \\
               x\tau_2, & x \in I \\
               b, & x \in (I,\emptyset). \\
             \end{array}
           \right.$$
Since $\tau_2$ is an order-isomorphism and $a \leq x\tau_2 \in Y' \subseteq [a,b] \leq b$, we have that
$\eta_2$ is an order-preserving transformation of $X$, i.e. $\eta_2 \in \O(X)$.
Note that if $\im \beta = X$ then $I = X$, $Y' = [a,b)$, and $a\eta_2 = a\tau_2 = a$, $(a,\emptyset)\eta_2 = (a,\emptyset)\tau_2 = (a,b) = (a,\emptyset)\tau_1^{-1}$. Moreover, in the case $I = X$ we have $\tau_2 \in \O(X)$.

Now, we consider the transformation $\beta^* = \eta_1\beta\eta_2$. Since $\beta, \eta_1, \eta_2 \in \OP(X)$, it follows that $\beta^* \in \OP(X)$.
We will show that $\beta^* \in \OP^*(X)$. Let $x \in X$. Then $(x\eta_1)\beta \in \im \beta \subseteq I$ and thus, $(x\eta_1\beta)\eta_2 = (x\eta_1\beta)\tau_2 \in Y' \subseteq [a,b]$. Thus, $x\beta^* = x\eta_1\beta\eta_2 \in [a,b]$.
Hence, $\im \beta^* = \im (\eta_1\beta\eta_2) \subseteq [a,b]$. Moreover, from $[b,\emptyset)\beta^* = \{a\beta\tau_2\}$, it follows $|[b,\emptyset)\beta^*| = 1$.
Therefore, $\beta^* \in \OP^*(X)$. From Proposition \ref{pr3}, it follows that $\beta^* \in \langle \O(X), \gamma^* \rangle$.

Further, we define the transformation $\theta_1 : X \rightarrow X$ by
$$x\theta_1 = \left\{
             \begin{array}{ll}
               a, & x = a \\
               x\tau_1^{-1}, & x \in (a,\emptyset). \\
             \end{array}
           \right.$$
Since $\tau_1$ is an order-isomorphism and $a < x\tau_1^{-1} \in (a,b)$, we have that
$\theta_1$ is an order-preserving transformation of $X$, i.e. $\theta_1 \in \O(X)$.

Now, we consider two cases for the set $I$.

1) If $(I,\emptyset) \neq \emptyset$ then we define the transformation  $\theta_2 : X \rightarrow X$ by
$$x\theta_2 = \left\{
             \begin{array}{ll}
               a, & x = a \mbox{ and } a \notin Y' \\
               x\tau_2^{-1}, & x \in Y' \\
               p, & x \in (Y',\emptyset), \\
             \end{array}
           \right.$$
where $p \in (I,\emptyset)$. Since $\tau_2$ is an order-isomorphism and $a \leq Y'\tau_2^{-1} = I < p \in (I,\emptyset)$, we obtain $\theta_2 \in \O(X)$.

2) If $(I,\emptyset)=\emptyset$ then there is an order-isomorphism $\eta_3$
from $Y'$ into a subset of $[a,c)$ with $(Y'\eta_3,c)=\emptyset$. We define a transformation $\theta_{2,1} : X \rightarrow X$ by
$$x\theta_{2,1} = \left\{
\begin{array}{ll}
a, &  x < Y' \\
x\eta_3, & x \in Y' \\
c, &  x > Y'.
\end{array}
\right.$$
Since $\eta_3$ is an order-isomorphism and $a \leq Y'\eta_3 < c$, we have $\theta_{2,1}\in \O(X)$.

Further, we define $\theta_{2,2} : X \rightarrow X$ by
$$x\theta_{2,2} = \left\{
\begin{array}{ll}
a, & x < Y'\eta_3\gamma^* \\
x(\eta_3\gamma^*)^{-1}\tau_{2}^{-1}, & x \in Y'\eta_3\gamma^*.
\end{array}
\right.$$
Since $(Y'\eta_3,c)=\emptyset$ and $\gamma^*|_{[a,c)}$ is an order-isomorphism, we have ($Y'\eta_3\gamma^*,\emptyset)=\emptyset$.
Hence, $\theta_{2,2}$ is well defined. Since $\eta_3$ as well as $\gamma^*|_{[a,c)}$ are order-isomorphisms
and $a \leq I = Y'\tau_{2}^{-1} = Y'(\eta_3\gamma^*)(\eta_{3}\gamma^*)^{-1}\tau_{2}^{-1}$, we have that $\theta_{2,2} \in \O(X)$.

We put $\theta_2 = \theta_{2,1}\gamma^*\theta_{2,2} \in \left\langle \O(X),\gamma^* \right\rangle$.
Note that $\theta_2|_{Y'} = (\theta_{2,1}\gamma^*\theta_{2,2})|_{Y'} = \eta_3\gamma^*(\eta_3\gamma^*)^{-1}\tau_2^{-1} = \tau_2^{-1}$.\\

Finally, we will show that $\beta = \theta_1\beta^*\theta_2$. Let $x \in X$.\\
If $x = a$ then $a\theta_1\beta^*\theta_2 = a\theta_1\eta_1\beta\eta_2\theta_2 = a\eta_1\beta\eta_2\theta_2 = a\beta\eta_2\theta_2 = a\beta\tau_2\theta_2 =  a\beta\tau_2\tau_2^{-1} = a\beta$, since $\tau_2\tau_2^{-1} = \id_I$, $\im \beta \subseteq I$. \\
If $x \in (a,\emptyset)$ then $x\theta_1\beta^*\theta_2 = x\tau_1^{-1}\beta^*\theta_2 = x\tau_{1}^{-1}\eta_{1}\beta\eta_{2}\theta_{2} = x\tau_{1}^{-1}\tau_{1}\beta\eta_{2}\theta_{2} = x\tau_{1}^{-1}\tau_{1}\beta\tau_{2}\theta_{2} = x\tau_{1}^{-1}\tau_{1}\beta\tau_{2}\tau_{2}^{-1}$, since $\im\beta \subseteq I$. Because $\tau_{1}^{-1}\tau_{1} = \id_{(a,\emptyset)}$, $\tau_{2}\tau_{2}^{-1} = \id_{I}$, and $x\id_{(a,\emptyset)}\beta \id_{I} = x\beta$, we obtain
$x\tau_{1}^{-1}\tau_{1}\beta\tau_{2}\tau_{2}^{-1} = x\beta$, i.e. $x\theta_1\beta^*\theta_2 = x\beta$.\\
Therefore, we have $\beta = \theta_1\beta^*\theta_2 \in \langle \O(X), \gamma^* \rangle$.\\

Altogether, we obtain $\OP(X) = \langle \O(X),\gamma^* \rangle$ and since $\O(X)$ is a proper submonoid of $\OP(X)$, we have $\rank(\OP(X) : \O(X)) = 1$.
\end{proof}
~~\\

One can dually consider the case $X$ has no minimum but a maximum. In this case, one obtains also $\rank(\OP(X) : \O(X)) = 1$.


\begin{thebibliography}{99}

\bibitem{AR} Arthur R. E., Ru\v{s}kuc N., Presentations for two extensions of the
monoid of order-preserving mappings on a finite chain, Southeast
Asian Bull. Math., 24 (2000), 1--7.

\bibitem{Ban} Banach S., Sur un th\'{e}or\`{e}me de M. Sierpi\'{n}ski, Fund. Math., 25 (1935), 5--6.

\bibitem{CH} Catarino P. M., Higgins P. M., The monoid of
orientation-preserving mappings on a chain, Semigroup Forum,
58 (1999), 190--206.

\bibitem{DFK} Dimitrova I., Fernandes V. H., Koppitz J., A note on generators of the endomorphism
semigroup of an infinite countable chain, Journal of Algebra and its Applications, 16(2) (2017), 1750031.

\bibitem{DKT} Dimitrova I., Koppitz J., Tinpun K.,
On the relative rank of the semigroup of orientation-preserving
transformations with restricted range, Proceedings of the
47-th Spring Conference of the Union of Bulgarian Mathematicians, (2018), 109--114.

\bibitem{DK} Dimitrova I., Koppitz J., On relative ranks of finite transformation
semigroups with restricted range, arXiv:2006.07724 [math.RA], (2020).

\bibitem{DNT} Dixon J. D., Neumann P. M., Thomas S., Subgroups of small index in infinite symmetric
groups, Bull. London Math. Soc., 18 (1986), 560--586.

\bibitem{FJS} Fernandes V. H., Jesus M. M., Singha B., On orientation-preserving
transformations of a chain, arXiv:1806.08440v2 [math.RA], (2020).

\bibitem{FS} Fernandes V. H., Sanwong J., On the rank of semigroups of transformations
on a finite set with restricted range, Algebra Colloq., 21 (2014), 497--510.

\bibitem{Galv} Galvin F., Generating countable sets of permutations, J. London Math. Soc., 51 (1995),
230--242.

\bibitem{GH} Gomes G. M. S., Howie J. M., On the ranks of certain finite semigroups of
transformations, Math. Proc. Cambridge Philos. Soc., 101 (1987), 395--403.

\bibitem{GH1} Gomes G. M. S., Howie J. M., On the rank of
certain semigroups of order-preserving transformations, Semigroup
Forum, 51 (1992), 275--282.

\bibitem{HMR1} Higgins P. M., Mitchell J. D., Ru\v{s}kuc N.,
Generating the full transformation semigroup using order preserving mappings,
Glasgow Math. J., 45 (2003), 557--566.

\bibitem{HMR2} Higgins P. M., Mitchell J. D., Ru\v{s}kuc N., Countable versus uncountable ranks in
infinite semigroups of transformations and relations, Proc. Edinb. Math. Soc., 46 (2003),
531--544.

\bibitem{HMMR} Higgins P. M., Mitchell J. D., Morayne M., Ru\v{s}kuc N.,
Rank properties of endomorphisms of infinite partially ordered sets, Bull. London Math. Soc., 38 (2006), 177--191.

\bibitem{HHR} Higgins P. M., Howie J. M., Ru\v{s}kuc N., Generators and factorisations of
transformation semigroups, Proc. Roy. Soc. Edinburgh, 128A (1998), 1355--1369.

\bibitem{HMcF} Howie J. M., McFadden R. B., Idempotent rank in finite full transformation
semigroups, Proc. Roy. Soc. Edinburgh, 114A (1990), 161--167.

\bibitem{HRH} Howie J. M., Ru\v{s}kuc N., Higgins P. M., On relative ranks of full transformation semigroups,
Comm. in Algebra, 26 (1998), 733--748.

\bibitem{McA} McAlister D., Semigroups generated by a group and an idempotent,
Comm. in Algebra, 26 (1998), 515--547.

\bibitem{McPhN} McPherson H. D., Neumann P. M., Subgroups of infinite symmetric groups,
J. London Math. Soc., 42(2) (1990), 64--84.

\bibitem{Ruskuc} Ru\v{s}kuc N., On the rank of completely 0-simple semigroups,
Math. Proc. Cambridge Philos. Soc., 116 (1994), 325--338.

\bibitem{Sierp} Sierpi\'{n}ski W., Sur les suites infinies de fonctions d\'{e}finies dans les ensembles quelconques,
Fund. Math., 24 (1935), 209--212.

\bibitem{Tinp} Tinpun K., Relative rank of infinite full transformation semigroups
with restricted range, PhD thesis, Potsdam University, 2019.

\bibitem{TK1} Tinpun K., Koppitz J., Relative rank of the finite full transformation semigroup
with restricted range, Acta Mathematica Universitatis Comenianae, 85(2) (2016), 347--356.

\bibitem{TK2} Tinpun K., Koppitz J., Generating sets of infinite full
transformation semigroups with restricted range, Acta Scientiarum
Mathematicarum, 82 (2016), 55--63.

\bibitem{ZF} Zhao P., Fernandes V. H., The ranks of ideals in various transformation
monoids, Comm. in Algebra, 43(2) (2015), 674--692.

\end{thebibliography}
\end{document}